\DeclareMathOperator*{\argmin}{arg\,min}
\theoremstyle{plain}
\newtheorem{theorem}{Theorem}[section]
\newtheorem{proposition}[theorem]{Proposition}
\newtheorem{assumption}[theorem]{Assumption}
\newtheorem{remark}[theorem]{Remark}
\newtheorem*{notation*}{Notation}
\numberwithin{equation}{section}
\begin{document}
\title{Reconstruction of piecewise-smooth multivariate functions from Fourier data}
\author{David Levin}

\footnotetext[1]{David Levin, School of Mathematical Sciences, Tel-Aviv University, Israel}

\maketitle

In some applications, one is interested in reconstructing a function $f$ from its Fourier series coefficients. The problem is that the Fourier series is slowly convergent if the function is non-periodic, or is non-smooth. In this paper, we suggest a method for deriving high order approximation to $f$ using a Pad\'e-like method. Namely, by fitting some Fourier coefficients of the approximant to the given Fourier coefficients of $f$.
Given the Fourier series coefficients of a function on a rectangular domain in $\mathbb{R}^d$, assuming the function is piecewise smooth, we approximate the function by piecewise high order spline functions. First, the singularity structure of the function is identified. For example in the 2-D case, we find high accuracy approximation to the curves separating between smooth segments of $f$. Secondly, simultaneously we find the approximations of all the different segments of $f$. We start by developing and demonstrating a high accuracy algorithm for the 1-D case, and we use this algorithm to step up to the multidimensional case.

\section{Introduction}

\medskip
\medskip
Fourier series expansion is a useful tool for representing and approximating functions, with application in many areas of applied mathematics. The quality of the approximation depends on the smoothness of the approximated function and on whether or not it is periodic.
For functions that are not periodic, the convergence rate is slow near the boundaries and the approximation by partial sums exhibits the Gibbs phenomenon.
There are several approaches that have been used to improve the convergence rate, mostly for the one-dimensional case. One approach is to filter out the oscillations, as discussed in several papers as \cite{GottliebShu} and \cite{Tadmor}. Another useful approach is to transform the Fourier series into an expansion in a different basis.
For the univariate case this approach is shown to be very efficient, as shown in \cite{GottliebShu} using Gegenbauer polynomials with suitably chosen parameters.
Further improvement of this approach is presented in \cite{GelbTanner} using Freud polynomials, achieving very good results for univariate functions with singularities.

An algebraic approach for reconstructing a piecewise
smooth univariate function from its first N Fourier coefficients
has been realized by Eckhoff in a series of papers \cite{Eckhoff1}, \cite{Eckhoff2}, \cite{Eckhoff3}. There
the “jumps” are determined by a corresponding system of linear equations. A full analysis of this approach is presented by Betankov \cite{Betankov}.
Nersessian and Poghosyan \cite{Nersessian} have used a rational Pad\'e type approximation strategy for approximating univariate non-periodic smooth functions.
For multiple Fourier series of smooth non-periodic functions, a convergence acceleration approach has been suggested by Levin and Sidi \cite{LevinSidi2001}.
More challenging is the case of multivariate functions with discontinuities, i.e., functions which are piecewise smooth. Here again, the convergence rate is slow, and near the discontinuities the approximation exhibits the Gibbs phenomenon. In this paper, we present a Pad\'e-like approach consisting of finding a piecewise-defined spline whose Fourier coefficients match the given Fourier coefficients. 

The main contribution of this paper is demonstrating that this approach can be successfully applied to the multivariate case. Namely, we present a strategy for approximating both non-periodic and non-smooth multivariate functions. We derive the numerical procedures involved and provide some interesting numerical results. We start by developing and demonstrating a high accuracy algorithm for the 1-D case, and use this algorithm to step up to the multidimensional case.

\section{The 1-D case}

In this section, we present the main tools for function approximation using its Fourier series coefficients.
We define the basis functions and describe the fitting strategy and develop the computation algorithm. After dealing with the smooth case we move on to approximating a piecewise smooth function with a jump singularity.

\subsection{Reconstructing smooth non-periodic functions}\hfill

\medskip
Let $f\in C^m[0,1]$, and assume we know the Fouries series expansion of $f$
\begin{equation}\label{FS1D}
f(x)=\sum_{n\in\mathbb{Z}}\hat{f}_ne^{2\pi inx}.
\end{equation}
The series converge pointwise for any $x\in [0,1]$, however, if $f$ is not periodic the convergence may be slow, and if $f(1)\ne f(0)$ the convergence is not uniform and the Gibbs phenomenon occurs near $0$ and near $1$. As discussed in \cite{Sidi1995} and \cite{LevinSidi2001}, one can apply convergence acceleration techniques for improving the convergence rate of the series. Another convergence acceleration approach is suggested by Gottlieb and Shu
\cite{GottliebShu} using Gegenbauer polynomials. Yet, in both approaches, the convergence rate is not much improved near $0$ and near $1$.  We suggest an approach in the spirit of Pad\'e approximation. A Pad\'e approximant is a rational function whose power series agrees as much as possible with the given power series of $f$. Here we look for approximations to $f$ whose Fourier coefficients agree with a subset of the given Fourier coefficients of $f$. The approximation space can be any favourable linear approximation space, such as polynomials or trigonometric functions.

We choose to build the approximation using $k$th order spline functions, represented in the B-spline basis:
\begin{equation}\label{sumaiB}
S^{[k]}_d(x)=\sum_{i=1}^{N_d}a_iB^{[k]}_d(x-id).
\end{equation}
$B^{[k]}_d(x)$ is the B-spline of order k with equidistant knots $\{-kd,...,-2d,-d,0\}$, and $N_d=1/d+k-1$ is the number of B-splines whose shifts do not vanish in $[0,1]$. The advantage of using spline functions is threefold: 

\begin{itemize}
\item The locality of the B-spline basis functions. 
\item A closed form formula for their Fourier series coefficients. 
\item Their approximation power, i.e., if $f\in C^k[0,1]$, there exists a spline $S^{[k]}_d$ such that $\|f-S^{[k]}_d\|_{\infty, [0,1]}\le Cd^k$.
\end{itemize}
The B-splines basis functions used in the 1D case are shown in Figure \ref{B12in01}.
\begin{figure}[!ht]
    \includegraphics[width=4in]{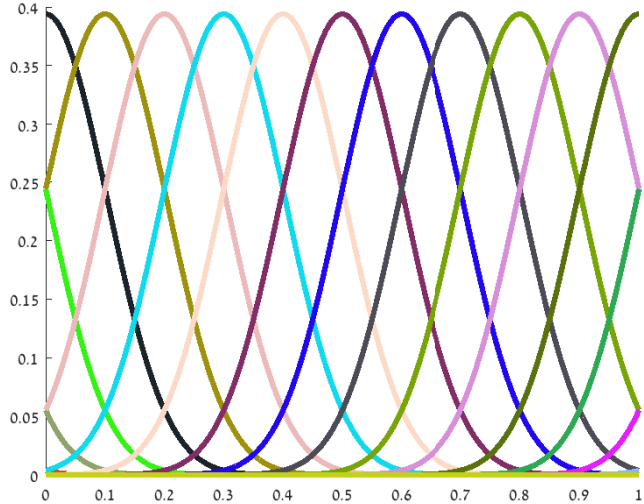}
    \caption{The B-splines used in Example 1}
    \label{B12in01}
\end{figure}
We denote by $S\equiv S^{[k]}_d|_{[0,1]}$ the restriction of $S^{[k]}_d$ to the interval $[0,1]$.
We find the coefficients $\{a_i\}_{i=1}^{N_d}$ by least-squares fitting, matching the first $M+1$ Fourier coefficients of $S$ to the corresponding $M+1$ Fourier coefficients of $f$. That is,
\begin{equation}\label{LS1}
\{a_i\}_{i=1}^{N_d}=\argmin\sum_{n=0}^M |\hat{f}_n-\hat{S}_n|^2.
\end{equation}
Notice that it is enough to consider the Fourier coefficients with non-negative indeices. 

We denote by $B_i\equiv B^{[k]}_d(\cdot-id)|_{[0,1]}$ the restriction of $B^{[k]}_d(\cdot-id)$ to the interval $[0,1]$, and by $\{\hat{B}_{i,n}\}$ its Fourier coefficients.
The induced system of linear equations for $a=\{a_i\}_{i=1}^{N_d}$ is $Aa=b$ where
\begin{equation}\label{Aij}
A_{i,j}=\sum_{n=0}^M[Re(\hat{B}_{i,n})Re(\hat{B}_{j,n})+Im(\hat{B}_{i,n})Im(\hat{B}_{j,n})],\ \ 1\le i,j \le N_d,
\end{equation}
and
\begin{equation}\label{bi}
b_i=\sum_{n=0}^M[Re(\hat{B}_{i,n})Re(\hat{f}_{n})+Im(\hat{B}_{i,n})Im(\hat{f}_{n})],\ \ 1\le i \le N_d.
\end{equation}

\subsubsection{Numerical Example - The smooth 1-D case}\hfill

\medskip
We consider the test function $f(x)=xexp(x)+sin(8x)$, assuming only its Fourier series coefficients are given. We have used only the 20 Fourier coefficients $\{\hat{f}_n\}_{n=0}^{19}$, and computed an approximation using 12th degree splines with equidistant knots' distance $d=0.1$.
 For this case, the matrix $A$ is of size $19\times 19$, and $cond(A)=5.75\times 10^{20}$. We have employed an iterative refinement algorithm described below to obtain a high precision solution. The results are shown in the following two figures. In Figure \ref{F1example} we see the test function on the left and the approximation error on the right. Figure \ref{Fig1Dsmooth} shows the graph of $Log_{10}(\hat{f}_n)$ in blue and the graph of $Log_{10}(\hat{f}_n-\hat{S}_n)$. Notice the matching in the first Fourier coefficients reflected in the begining of the red graph.
 
\begin{remark}{\bf Iterative refinement}
The powerful iterative refinement method described in\cite{Wilk}, \cite{Moler} is as follows:

For solving a system $Ax=b$, we use some solver, e.g. the matlab pinv function. We obtain the solution $x^{(0)}=pinv(A)b$. Next we compute the residual $r^{(0)}=b-Ax^{(0)}$. In case $cond(A)$ is very large, the residual will be large. Now we solve again the system with $r^{(0)}$ at the right hand side,
and use the solution to correct $x^{(0)}$, to obtain
$$x^{(1)}=x^{(0)}+pinv(A)r^{(0)}.$$
We repeat this correction steps a few times, i.e., $r^{(k)}=b-Ax^{(k)}$, and
$$x^{(k+1)}=x^{(k)}+pinv(A)r^{(k)},$$
until the resulting residual $r^{(k)}$ is small enough.
\end{remark}

\begin{figure}[!ht]
    \includegraphics[width=6in]{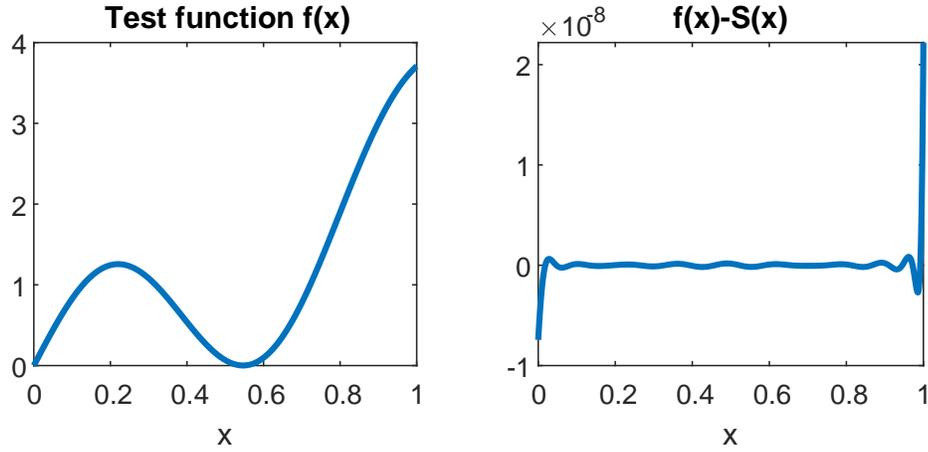}
    \caption{The test function (left) and the spline approximation error (right).}
    \label{F1example}
\end{figure}

\begin{figure}[!ht]
    \includegraphics[width=4in]{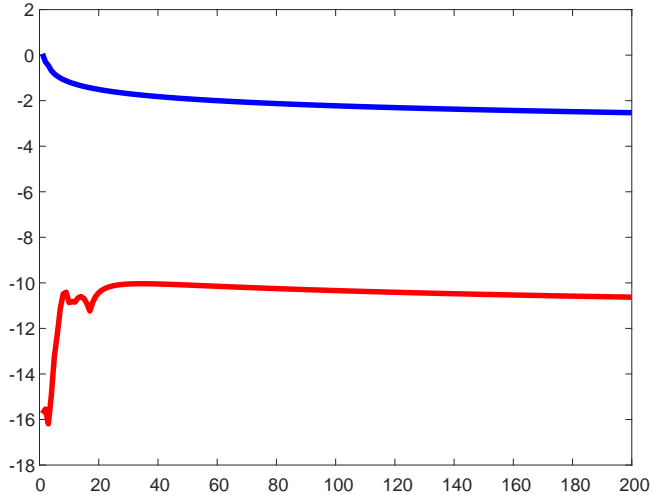}
    \caption{Eight orders of magnitude reduction in the Fourier coefficients.}
    \label{Fig1Dsmooth}
\end{figure}

 \subsection{Reconstructing non-smooth univariate functions}\hfill

\medskip
Let $f$ be a piecewise smooth function on $[0,1]$, defined by combined two pieces $f_1\in C^m[0,s^*]$ and $f_2\in C^m(s^*,1]$.
\begin{equation}\label{testf1Dnonsmooth0}
f(x)=
\begin{cases} 
f_1(x)& x\ge s^*,\\
f_2(x)& x< s^*. \\ 
\end{cases}
\end{equation}
 
Here again, we assume that all we know about $f$ is its Fourier series expansion.  In particular, we do not know the position $s^*\in [0,1]$ of the singularity of $f$. As in the case of a non-periodic function, the existence of a singularity in $[0,1]$ significantly influences the Fourier series coefficients and implies their slow decay. As we demonstrate below, good matching of the Fourier coefficients requires a good approximation of the singularity location. The approach we suggest here involves finding approximations to $f_1$ and $f_2$ simultaneously with a high precision identification of $s^*$.

Let $s$ be an approximation of the singularity location $s^*$, and let us follow the algorithm suggested above for the smooth case. The difference here is that now we look for two separate spline approximations:
\begin{equation}\label{S1}
S_1\equiv S^{[k]}_d|_{[0,s]}(x)=\sum_{i=1}^{N_d}a_{1i}B^{[k]}_d(x-id)|_{[0,s]}\sim f_1,
\end{equation}
and
\begin{equation}\label{S2}
S_2\equiv S^{[k]}_d|_{(s,1]}(x)=\sum_{i=1}^{N_d}a_{2i}B^{[k]}_d(x-id)|_{(s,1]}\sim f_2.
\end{equation}
The combination $S$ of $S_1$ and $S_2$ constitutes the aproximation to $f$. Here again we aim at matching the first $M+1$ Fourier coefficients of $f$ and of $S$. Here $S$ dependes upon the $N_d$ coefficients $\{a_{1i}\}$ of $S_1$, the $N_d$ coefficients $\{a_{2i}\}$ of $S_2$ and upon $s$. Therefore, the minimization process solves for all these unknowns:

\begin{equation}\label{LS2}
\big[\{a_{1i}\}_{i=1}^{N_d},\{a_{2i}\}_{i=1}^{N_d},s\big]=\argmin\sum_{n=0}^M |\hat{f}_n-\hat{S}_n|^2.
\end{equation}
The minimization is non-linear w.r.t. $s$, and linear w.r.t. the other unknowns. Therefore, the minimization problem is actually a one parameter non-linear minimization problem, the parameter $s$. Using the approximation power of $k$th order splines ($k\le m$), and considering the value of the objective cost function for $s=s^*$, we can deduce that the minimal value of $\sum_{n=0}^M |\hat{f}_n-\hat{S}_n|^2$ is $O(d^{2k})$. We also observe that an $\epsilon$ deviation from $s^*$ implies a bounded deviation of the minimizing Fourier coeficients
\begin{equation}\label{epsdev}
\max_{n\in\mathbb{Z}}|\hat{f}_n-\hat{S}_n|\le c_1\epsilon+c_2d^k.
\end{equation}
As shown below, these observations can be used for finding a good approximation to $s^*$.

We denote by $B_{1i}\equiv B^{[k]}_d(\cdot-id)|_{[0,s]}$ the restriction of $B^{[k]}_d(\cdot-id)$ to the interval $[0,s]$, and by  $B_{2i}\equiv B^{[k]}_d(\cdot-id)|_{(s,1]}$ the restriction of $B^{[k]}_d(\cdot-id)$ to the interval $(s,1]$. We concatenate these two sequences of basis functions, $\{B_{1i}\}$ and $\{B_{2i}\}$ into one sequence $\{B_i\}_{i=1}^{2N_d}$, and denote their Fourier coefficients by $\{\hat{B}_{i,n}\}_{n\in\mathbb{Z}}.$
For a given $s$, the induced system of linear equations for the splines' coefficients $a=(\{a_{1i}\}_{i=1}^{N_d},\{a_{2i}\}_{i=1}^{N_d})$ is $Aa=b$ defined as follows:

\begin{equation}\label{Aijns}
A_{i,j}=\sum_{n=0}^M[Re(\hat{B}_{i,n})Re(\hat{B}_{j,n})+Im(\hat{B}_{i,n})Im(\hat{B}_{j,n})],\ \ 1\le i,j \le 2N_d,
\end{equation}
and
\begin{equation}\label{bins}
b_i=\sum_{n=0}^M[Re(\hat{B}_{i,n})Re(\hat{f}_{n})+Im(\hat{B}_{i,n})Im(\hat{f}_{n})],\ \ 1\le i \le 2N_d.
\end{equation}

\begin{remark}
Due to the locality of the B-splines, some of the basis functions $\{B_{1i}\}$ and $\{B_{2i}\}$ may be identical $0$. It thus seems better to use only the non-zero basis functions. From our experience, since we use the generalized inverse approach for solving the system of equations, using all the basis functions gives the same solution.

The generalized inverse approach computes the least-squares solution to a system of linear equations that lacks a unique solution. It is also called the {\bf Moore-Penrose inverse}, and is computed by matlab {\bf pinv}.
\end{remark}

*** The above construction can be carried out to the case of several singular points.

\subsubsection{ Finding $s^*$.}\label{Findingsstar}\hfill

\medskip
We present the strategy for finding $s^*$ together with a specific numerical example.
We consider a test function on $[0,1]$ with a jump discontinuity at  $s^*=0.5$:
\begin{equation}\label{testf1Dnonsmooth}
f(x)=
\begin{cases} 
f_1(x)=sin(5x)& x\ge s^*,\\
f_2(x)=\frac{1}{(x-0.5)^2+0.5} & x< s^*. \\ 
\end{cases}
\end{equation}  
As expected, the Fourier series of $f$ is slowly convergent, and it exhibits the Gibbs phenomenon near the ends of $[0,1]$ and near $s^*$. In Figure \ref{GibbsandDiff200}, on the left, we present the sum of the first 200 terms of the Fourier series, computed at 20000 points in $[0,1]$. This sum is nonacceptable as an approximation to $f$, and yet we can use it to obtain a good initial approximation to $s_0\sim s^*$. On the right graph, we plot the first differences of the values in the left graph. The maximal difference is achieved at a distance of order $10^{-4}$ from $s^*$. 

Having a good approximation $s_0\sim s^*$ is not enough for achieving a good approximation to $f$. However, $s_0$ can be used as a starting point for an iterative method leading to a high precision approximation to $s^*$.
To support this assertion we present the graph in Figure \ref{nearsstar}, depicting the maximum norm of the difference between 1000 of the given Fourier coefficients and the corresponding Fourier coefficients of the approximation $S$, as a function of $s$, near $s^*=0.5$. This function is almost linear on each side of $s^*$, and simple quasi-Newton iterations converge very fast to $s^*$.

After obtaining a high accuracy approximation to $s^*$, we use it for deriving the piecewise spline approximation to $f$. Figure \ref{error1Dnonsmooth} depicts the approximation error, while Figure \ref{nonsmooth1Dfferror} shows $Log_{10}$ of the absolute values of the given Fourier coefficients of $f$ (in blue), and the corresponding values for the Fourier coefficients of $f-S$ (in red). The graph shows a reduction of $\sim 7$ orders of magnitude.


\begin{figure}[!ht]
    \includegraphics[width=6in,height=2.5in]{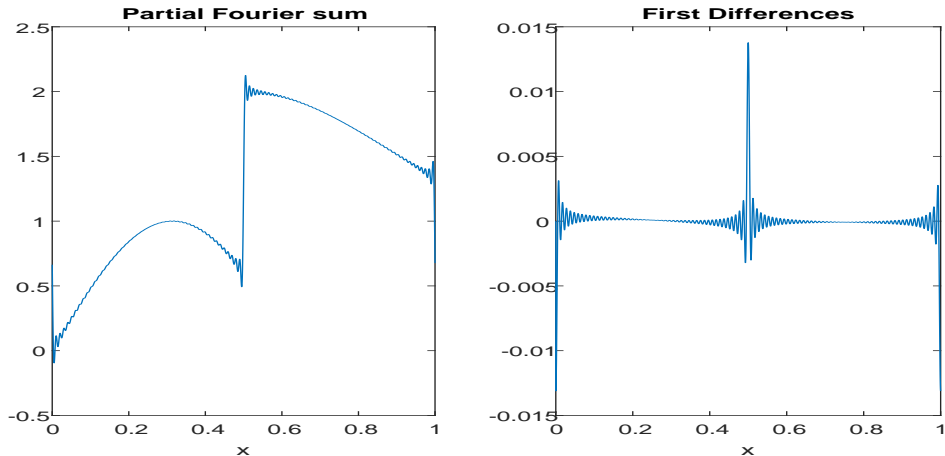}
    \caption{A partial Fourier sum (left) and its first differences (right).}
    \label{GibbsandDiff200}
\end{figure}

\begin{figure}[!ht]
    \includegraphics[width=4in,height=2.5in]{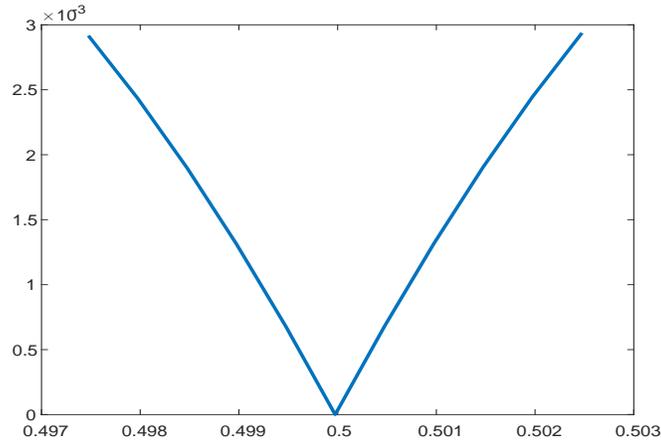}
    \caption{The graph of the error $\|\hat{f}-\hat{S}\|$ as a function of $s$ near $s^*=0.5$.}
    \label{nearsstar}
\end{figure}

\begin{figure}[!ht]
    \includegraphics[width=4in]{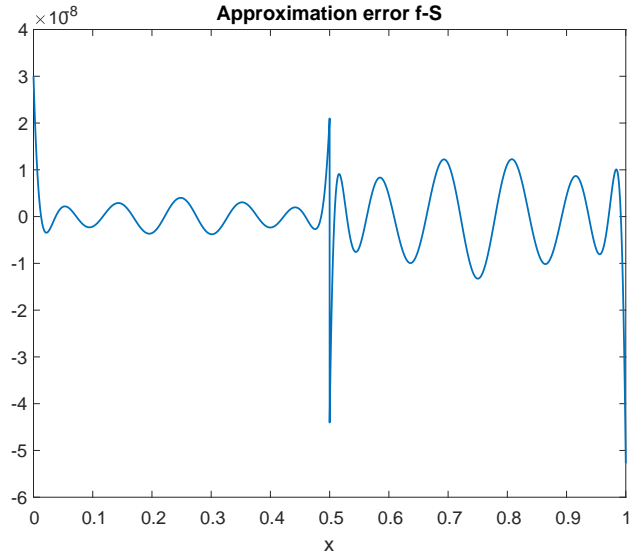}
    \caption{The approximation error for the 1D non-smooth case.}
    \label{error1Dnonsmooth}
\end{figure}

\begin{figure}[!ht]
    \includegraphics[width=4in]{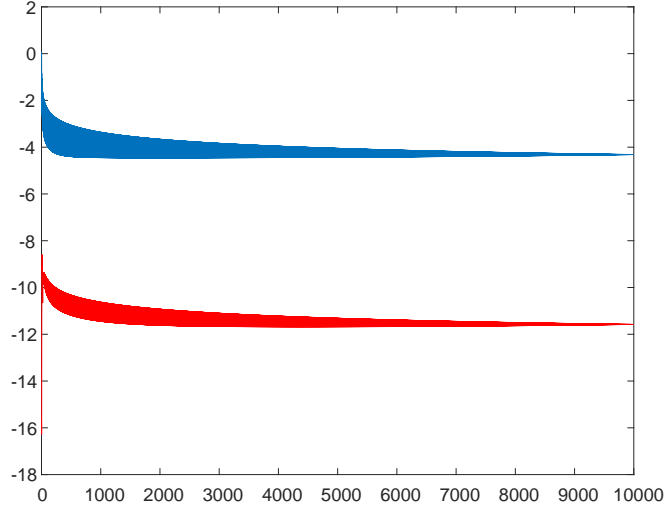}
    \caption{$Log_{10}$ of the given Fourier coefficients (blue), and of the Fourier coefficients of the approximation error (red)}
    \label{nonsmooth1Dfferror}
\end{figure}

\vfill\eject
\subsubsection{The 1-D approximation procedure}\hfill

\medskip
Let us sum up the suggested approximation procedure:

\begin{enumerate}
\item Choose the approximation space $\Pi$ for approximating $f_1$ and $f_2$.
\item Define the number of Fourier coefficients to be used for building the approximation such that
\begin{equation}\label{chooseM1}
M+1\ge 2dim(\Pi).
\end{equation}
\item Find first approximation to $s^*$: Compute a partial Fourier sum and locate maximal first order difference.
\item Calculate the first $M+1$ Fourier coefficients of the basis functions of $\Pi$, truncated at $s^*$.
\item Use the above Fourier coefficients to compute the approximation to $f_1$ and $f_2$ by solving the system of linear equation defined by (\ref{Aijns}), (\ref{bins}).
\item Update the approximation to $s^*$, by performing quasi-Newton iterations to reduce the objective function in (\ref{LS2}).
\item Go back to (4) to update the approximation.

\end{enumerate}

\section{The 2-D case - non periodic and non-smooth}

\subsection{The smooth 2-D case}\hfill

\medskip
Let $f\in C^m[0,1]^2$, and assume we know its Fouries series expansion
\begin{equation}\label{FS2D}
f(x,y)=\sum_{m\in\mathbb{Z}}\sum_{n\in\mathbb{Z}}\hat{f}_{mn}e^{2\pi imx}e^{2\pi iny}.
\end{equation}
Such series are obtained when solving PDE using spectral methods. However, if the function is not periodic, or, as in the case of hyperbolic equations, the function has a jump discontinuity along some curve in $[0,1]^2$, the convergence of the Fourier series is slow. Furthermore, the approximation of $f$ by its partial sums suffers from the Gibbs phenomenon near the boundaries and near the singularity curve.

We deal with the case of smooth non-periodic 2-D functions in the same manner as we did for the univariate case. We look for a bivariate spline function $S$ whose Fourier coefficients match the Fourier coefficients of $f$. As in the univariate case, it is enough to match the coefficients of low frequency terms in the Fourier series. The technical difference in the 2-D case is that we look for a tensor product spline approximation, using tensor product $k$th order B-spline basis functions. 
\begin{equation}\label{sumaijB}
S^{[k]}_d(x,y)=\sum_{i=1}^{N_d}\sum_{j=1}^{N_d}a_{ij}B^{[k]}_d(x-id)B^{[k]}_d(y-jd).
\end{equation}
The system of equations for the B-spline coefficients is the same as the system defined by (\ref{Aij})-(\ref{bi}) in the univariate case, only here we reshape the unknowns as a vector of $N_d^2$ unknowns.

\subsubsection{Numerical Example - The smooth 2-D case}\hfill

\medskip
We consider the test function 
$$f(x,y)=\frac{10}{1+10(x^2+(y-1)^2)}+sin(10(x-y)),$$ 
assuming only its Fourier series coefficients are given. We have used only $160$ Fourier coefficients, and constructed an approximation using 10th degree tensor product splines with equidistant knots' distance $d=0.1$ in each direction.
For this case, the matrix $A$ is of size $361\times 361$, and $cond(A)=6.2\times 10^{30}$. Again, we have employed the iterative refinement algorithm to obtain a high precision solution (relative error $10^{-15}$). Computation time $\sim 18$ seconds.

In figure  \ref{testf2Dsmooth} we plot the test function on $[0,1]^2$. Note that it has high derivatives near $(0,1)$.

\begin{figure}[!ht]
    \includegraphics[width=4in]{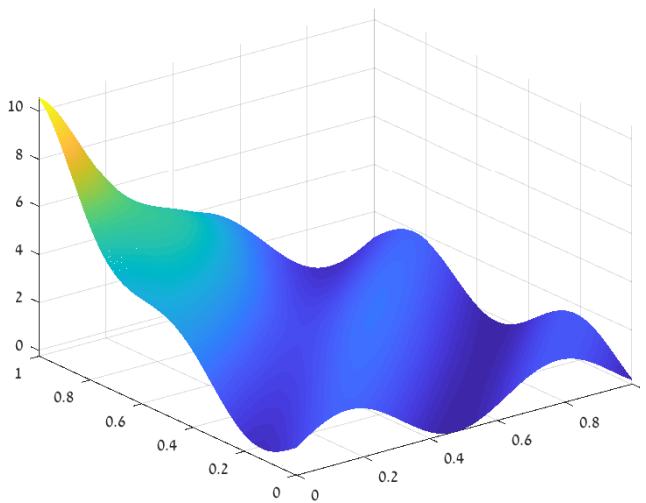}
    \caption{The test function for the smooth 2D case}
    \label{testf2Dsmooth}
\end{figure}

The approximation error $f-S^{[10]}_{0.1}$ is shown in figure \ref{error2Dsmooth}.
\begin{figure}[!ht]
    \includegraphics[width=4in]{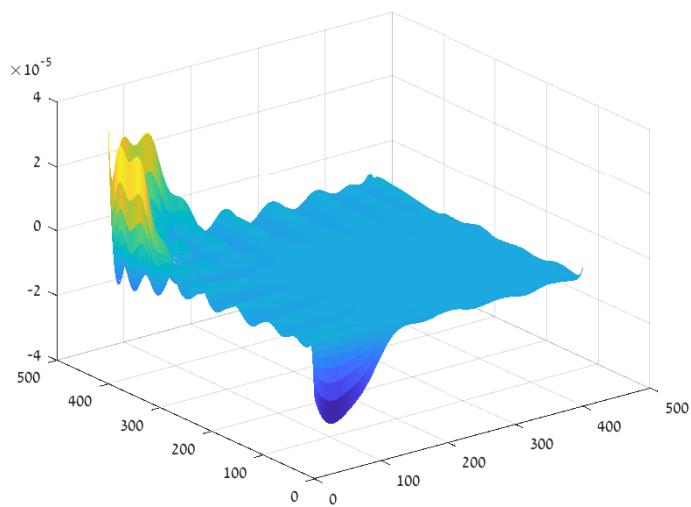}
    \caption{The approximation error $f-S^{[10]}_{0.1}$.}
    \label{error2Dsmooth}
\end{figure} 
To demonstrate the convergence acceleration of the Fourier series achieved by substracting the approximation from $f$, we present in Figure \ref{ffcoef2Dsmooth}
$Log_{10}$ of the absolute values of the Fourier coefficients of $f$ (in green) and the of the Fourier coefficients of $f-S^{[10]}_{0.1}$ (in blue), for frequencies $0\le m,n\le 200$. The magnitude of the Fourier coefficients is reduced by a factor of $10^5$, and even more so for the low frequencies due to the matching strategy used to derive the spline approximation.
\begin{figure}[!ht]
    \includegraphics[width=4in]{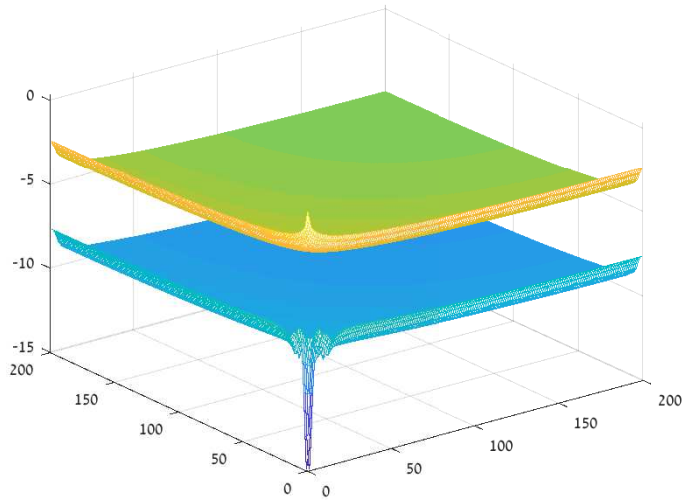}
    \caption{$Log_{10}$ of the Fourier coefficients before (green), and after (blue).}
    \label{ffcoef2Dsmooth}
\end{figure} 

\vfill\eject

\subsection{The non-smooth 2-D case}\label{Sec2Dns}\hfill

\medskip
Let $f$ be a piecewise smooth function on $[0,1]^2$, defined by combined two pieces $f_1\in C^m[\Omega_1]$ and $f_2\in C^m[\Omega_2]$, $\Omega_2=[0,1]^2\setminus \Omega_1$. Here again, we assume that all we know about $f$ is its Fourier series expansion.  In particular, we do not know the position of the dividing curve separating $\Omega_1$ and $\Omega_2$. We denote this curve by $\Gamma^*$, and we assume that it is a $C^m$-smooth curve. As in the case of a non-periodic function, the existence of a singularity curve in $[0,1]^2$ significantly influences the Fourier series coefficients and implies their slow decay. In case of discontinuity of $f$ across $\Gamma^*$, partial sums of the Fourier series exhibit the Gibbs phenomenon near $\Gamma^*$ . As demonstrated below, good matching of the Fourier coefficients requires a good approximation of the singularity location. As in the univariate non-smooth case, the computation algorithm involves finding approximations to $f_1$ and $f_2$ simultaneously with a high precision identification of $\Gamma^*$.

Evidently, finding a high precision approximation of the singularity curve $\Gamma^*$ is more involved than finding a high precision approximation to the singularity point $s^*$ in the univariate case. Let $D_{\Gamma^*}(x,y)$ be the signed-distance function corresponding to the curve $\Gamma^*$:
\begin{equation}\label{DGamma}
D_{\Gamma^*}(x,y)=
\begin{cases} 
\ \ dist((x,y),\Gamma^*)& (x,y) \in\Omega_1,\\
-dist((x,y),\Gamma^*)& (x,y) \in\Omega_2.\\ 
\end{cases}
\end{equation}
In looking for an approximation to $\Gamma^*$, we look for an approximation to $D_{\Gamma^*}$. Here again we are using a tensor product spline approximants, the same set of spline functions described in the previous section. Since the curve is $C^m$, it can be shown that one can construct a spline function  $\tilde{D}$ of order $k\le m$, with knots' distance $h$, which approximates $D_{\Gamma^*}$ near $\Gamma^*$ so that the Hausdorff distance belween the zero level set of $\tilde{D}$ and $\Gamma^*$ is $O(h^{k})$.

Let $D_{\bar{b}}$ be a spline approximation to $D_{\Gamma^*}$, with spline coefficients $\bar{b}=\{b_{ij}\}_{i,j=1}^{N_h}$:
\begin{equation}\label{Db}
D_{\bar{b}}(x,y)=\sum_{i=1}^{N_h}\sum_{j=1}^{N_h}b_{ij}B^{[k]}_h(x-ih)B^{[k]}_h(y-jh).
\end{equation}
For a given $D_{\bar{b}}$ we define the approximation to $f$
similar to the construction in the univariate case by equations (\ref{S1}), (\ref{S2}), (\ref{LS2}). We look here for an approximation $S$ to $f$ which is a combination of two bivariate splines components:
\begin{equation}\label{S12D}
S(x,y)=\sum_{i=1}^{N_d}\sum_{j=1}^{N_d}a_{1ij}B^{[k]}_d(x-id)B^{[k]}_d(y-jd), \ \ \ D_{\bar{b}}(x,y)\ge 0,
\end{equation}
\begin{equation}\label{S22D}
S(x,y)=\sum_{i=1}^{N_d}\sum_{j=1}^{N_d}a_{2ij}B^{[k]}_d(x-id)B^{[k]}_d(y-jd),
\ \ \ D_{\bar{b}}(x,y) < 0,
\end{equation}
such that $(2M+1)^2$ Fourier coefficients of $f$ and $S$  are matched in the least-squares sense:
\begin{equation}\label{LS2D}
\big[\{a_{1ij}\}_{i,j=1}^{N_d},\{a_{2ij}\}_{i,j=1}^{N_d},\{b_{ij}\}_{i,j=1}^{N_d}\big]=\argmin\big(\sum_{m,n=-M}^M |\hat{f}_{mn}-\hat{S}_{mn}|^2\big).
\end{equation}
We denote by $B_{1ij}(x,y)$ the restriction of $B^{[k]}_d(x-id)B^{[k]}(y-jd)$ to the domain defined by 
$D_{\bar{b}}(x,y)\ge 0$, and by $B_{2ij}(x,y)$ the restriction of $B^{[k]}_d(x-id)B^{[k]}(y-jd)$ to the domain defined by 
$D_{\bar{b}}(x,y)< 0$. We concatenate these two sequences of basis functions, $\{B_{1ij}\}$ and $\{B_{2ij}\}$ into one sequence $\{B_{ij}\}_{i=1,j=1}^{N_d,2N_d}$, denoting their Fourier coefficients by $\{\hat{B}_{ij,n}\}_{n\in\mathbb{Z}}$, and rearranging them (for each $n$) in vectors of lenght $2N_d^2$, $\{\hat{B}_{i,n}\}^{2N_d^2}_{i=1,n\in\mathbb{Z}}$.
For a given $D_{\bar\hat{b}}$, the induced system of linear equations for the splines' coefficients $a=(\{a_{1ij}\}_{i,j=1}^{N_d},\{a_{2ij}\}_{i,j=1}^{N_d})$ is $Aa=b$ defined as follows:

\begin{equation}\label{Aijns2D}
A_{i,j}=\sum_{m,n=-M}^M[Re(\hat{B}_{i,n})Re(\hat{B}_{j,n})+Im(\hat{B}_{i,n})Im(\hat{B}_{j,n})],\ \ 1\le i \le 2N_d^2,
\end{equation}
and
\begin{equation}\label{bins2D}
b_i=\sum_{m,n=-M}^M[Re(\hat{B}_{i,n})Re(\hat{f}_{n})+Im(\hat{B}_{i,n})Im(\hat{f}_{n})],\ \ 1\le i \le 2N_d^2.
\end{equation}

For a given choice of $\bar{b}=\{b_{ij}\}$, the coefficients 
$\{a_{1ij}\}_{i,j=1}^{N_d},\{a_{2ij}\}_{i,j=1}^{N_d}$ are obtained by solving a linear system of equations, and properly rearranging the solution. However, finding the optimal $\bar{b}$ is a non-linear problem that requires an iterative process and is much more expensive. 

\begin{remark}
Representing the singularity curve of the approximation $S$ as the zero level set of the bivariate spline function $D_{\bar{b}}$ is the way to achieve a smooth control over the approximation. As a result, the objective function in (\ref{LS2D}) varies smoothly with respect to the spline coefficients $\{b_{ij}\}$.
\end{remark}

Here again we choose to demonstrate the whole approximation procedure alongside a specific numerical example.

\subsubsection{The approximation procedure - a numerical example}\hfill

\medskip
Consider a piecewise smooth function on $[0,1]^2$ with a jump singularity across the curve $\Gamma^*$ which is the quarter circle defined by $x^2+y^2=0.5$. The test function is shown in Figure \ref{testf2Dnonsmooth} and is defined as
\begin{equation}\label{testf2Dnonsmooth}
f(x,y)=
\begin{cases} 
(x^2+y^2-0.5)sin(10(x+y))& x^2+y^2\ge 0.5,\\
(x^2+y^2-0.5)sin(10(x+y))+2x & x^2+y^2< 0.5.\\ 
\end{cases}
\end{equation}
\begin{figure}[!ht]
    \includegraphics[width=4in]{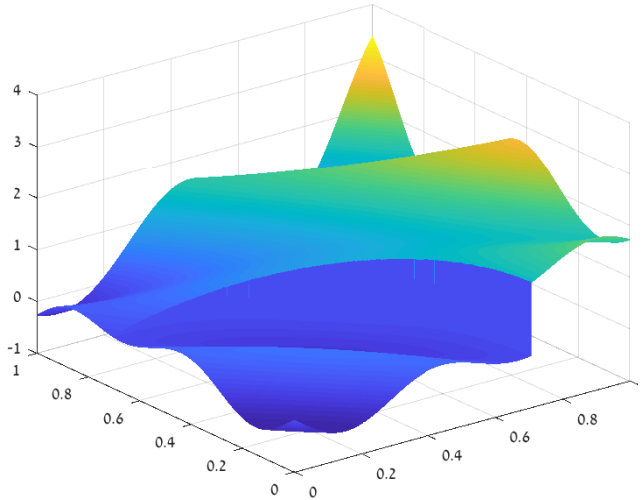}
    \caption{The test function for the 2D non-smooth case.}
    \label{testf2Dnonsmooth}
\end{figure}

In the univariate case, in Section \ref{Findingsstar}, we use the Gibbs phenomenon in order to find an initial approximation $s_0$ to the singularity location $s^*$. The same idea, with some modifications to the 2D case, is applied here. The truncated Fourier sum
\begin{equation}\label{F50}
f_{50}(x,y)=\sum_{m,n=-50}^{50}\hat{f}_{mn}e^{2\pi imx}e^{2\pi iny}.
\end{equation}
gives an approximation to $f$, but the approximation suffers from a Gibbs phenomenon near the boundaries of the domain and near the singularity curve $\Gamma^*$.  We evaluated $f_{50}$ on a $400\times 400$ mesh on $[0,1]^2$, and enhanced the Gibbs effect by applying first order differences along the $x$-direction. The results are depicted in Figure \ref{Gibbsnearsingularity}. The locations of large $x$-direction differences and of large $y$-direction differences within $[0,1]^2$ indicate the location of $\Gamma^*$.

\begin{figure}[!ht]
    \includegraphics[width=4in]{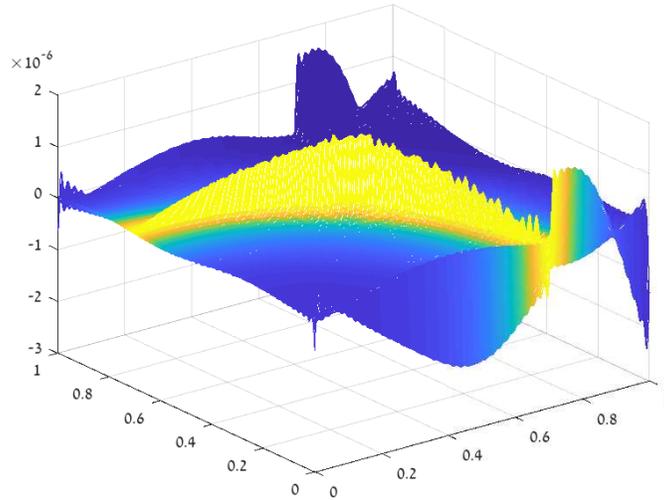}
    \caption{First order $x$-direction differences of a truncated Fourier sum - Notice the relatively high values at the boundary and near the singularity curve.}
    \label{Gibbsnearsingularity}
\end{figure}
\medskip
{\bf Building the initial approximation $D_{\bar{b}_0}$:}

\medskip
Searching along 50 horizontal lines ($x$-direction) for maximal $x$-direction differences, and along 50 vertical lines ($y$-direction) for maximal $y$ direction differences, we have found $72$ such maximum points, which we denote by $P_0$. We display these points (in red) in Figure \ref{Max1DiffPoints}, on top of the curve $\Gamma^*$ (in blue).
\begin{figure}[!ht]
    \includegraphics[width=4in]{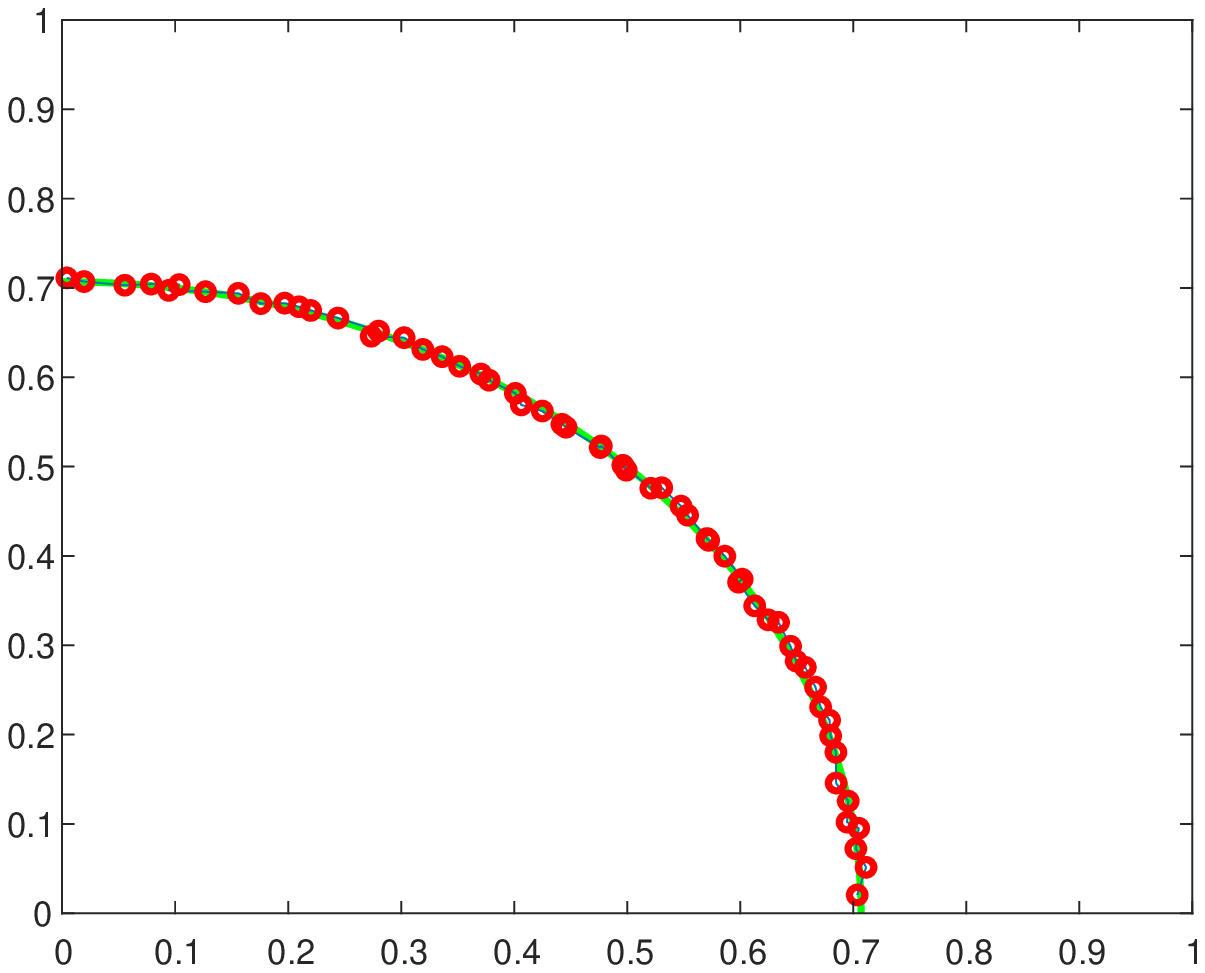}
    \caption{The singularity curve $\Gamma^*$ (blue) and points of maximal first differences of $f_{50}$.}
    \label{Max1DiffPoints}
\end{figure}
Now we use these points to construct the spline $D_{\bar{b}_0}$, whose zero level curve is taken as the initial approximation to $\Gamma^*$. To construct $D_{\bar{b}_0}$ we first overlay on $[0,1]^2$ a net of $11\times 11$ points, $Q_0$. These are the green points displayed in Figure \ref{SignedDistanceAppr4}.

\begin{figure}[!ht]
    \includegraphics[width=4in]{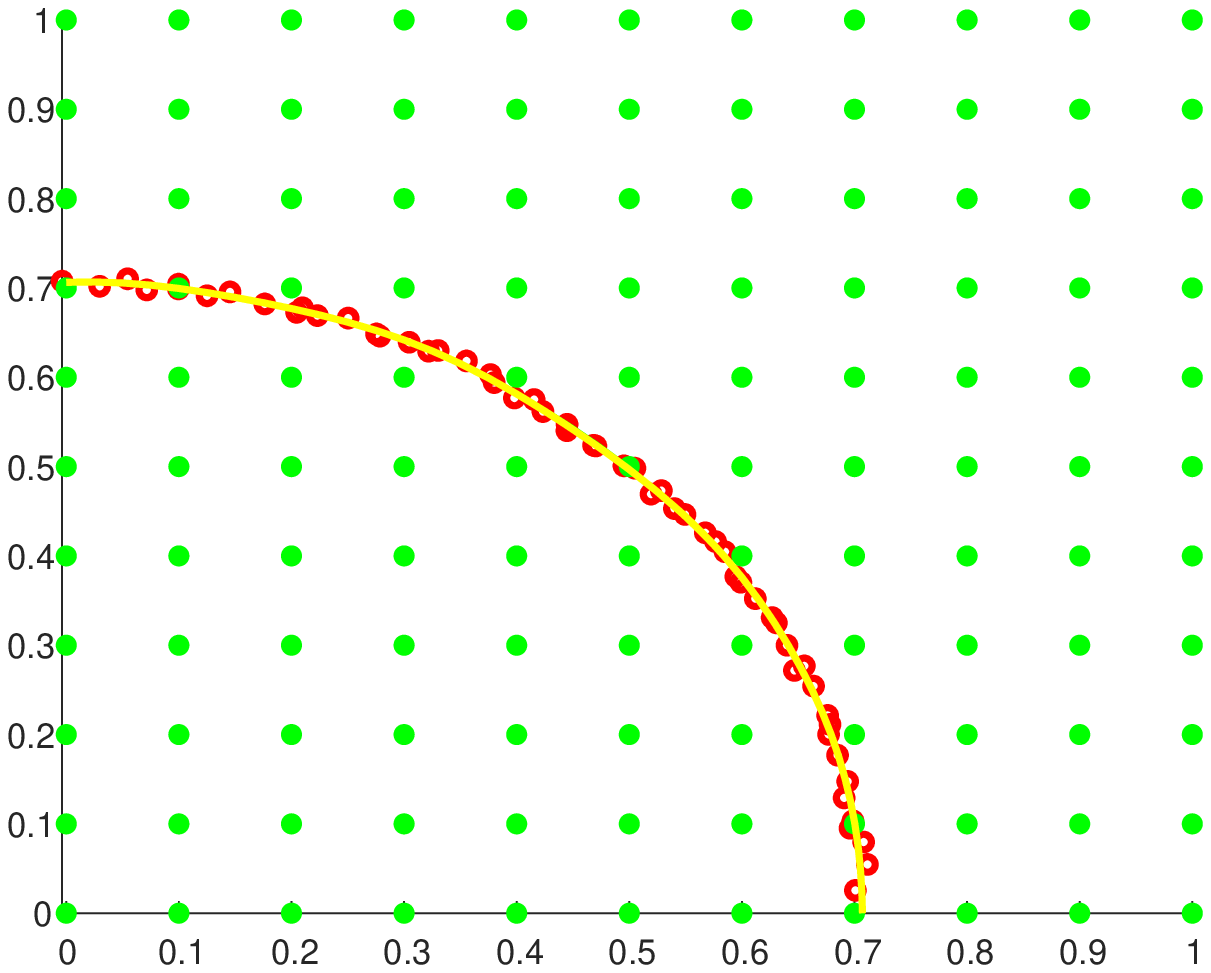}
    \caption{The singularity curve $\Gamma^*$ (blue) and points of maximal first differences of $f_{50}$.}
    \label{SignedDistanceAppr4}
\end{figure}

To each point in $Q_0$ we assign the value of its distance from the set $P_0$, with a plus sign for points which are on the right or above $P_0$, and a minus sign for the other points. To each point in $P_0$ we assign the value zero. The spline function $D_{\bar{b}_0}$ is now defined by the least-squares approximation to the values at all the points $P_0\cup Q_0$. We have used here tensor product splines of order $10$, on a uniform mesh with knots' distance $=0.1$. We denote the level curve zero of the resulting $D_{\bar{b}_0}$ as $\Gamma_0$, and this curve is depicted in yellow in Figure \ref{SignedDistanceAppr4}. It seems that 
$\Gamma_0$ is already a good approximation to $\Gamma^*$ (in blue), and thus it is a good starting point for achieving the minimization target (\ref{LS2D}).

\medskip
{\bf Improving the approximation to $\Gamma^*$, and building the  two approximants.}

\medskip
Starting from $D_{\bar{b}_0}$ we use a quasi-Newton method for iterative improvement of the approximation to $\Gamma^*$.
The expensive ingredient in the computation procedure is the need to recompute the Fourier coefficients of the $B$-splines for any new set of coefficients $\bar{b}$ of $D_{\bar{b}}$. We recall that we need $(2M+1)^2$ of these coefficients for each $B$-spline, and we have $2N_d^2$ $B$-splines. In the numerical example we have used $M=40$ and $N_d=19$. To illustrate the issue we present in Figure 
\ref{B10112Dnonsmooth} one of those $B$-spline whose support intersects the singularity curve. When the singularity curve is updated, the Fourier coefficients of this $B$-spline are recalculated.

\begin{remark}{\bf Calculating Fourier coefficients of the B-splines}
Calculating the Fourier coefficients of the B-splines is the most costly step in the approximation procedure. For the univariate case the Fourier coefficients of the B-splines can be computed analytically. For the smooth nultivariate case, with no singularity within the unit cube, piecewise Gauss quadrature may be used to compute the Fourier coefficients with high precision. The non-smooth multivariate case is more difficult, and more expensive. However, we noticed that using low precision approximations for the Fourier coefficients of the B-splines is fine. For example, in the above example, we have employed a simple numerical quadrature combined with fast Fourier transform, and we obtained the Fourier coefficients with a relative error     
$\sim 10^{-5}$. Yet the resulting approximation error is small $\|f-S\|_\infty<5\times 10^{-6}$, as seen in Figure \ref{Error2Dboptimalnoisy}. 
\end{remark}

\begin{figure}[!ht]
    \includegraphics[width=4in]{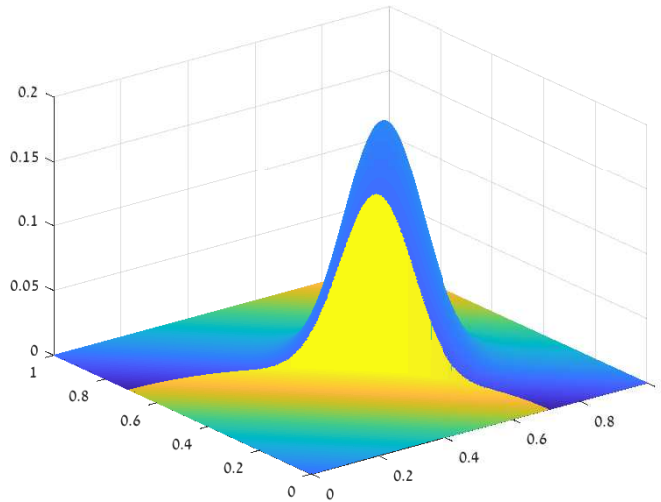}
    \caption{One of the tensor product B-splines used for the approximation of $f$, chopped off by the singularity curve.}
    \label{B10112Dnonsmooth}
\end{figure}

Using one quasi-Newton step we obtained new spline coefficients $\bar{b}_1$ and an improved approximation $\Gamma_1$ to $\Gamma^*$ as the zero level set of $D_{\bar{b}_1}$. Stopping the procedure at this point yield approximation results as shown in the figures below. Figure \ref{Error2Db0} shows the approximation error $f-S$ on $[0,1]^2\setminus U$, where $U$ is a small neighborhood of $\Gamma^*$. Figure \ref{Log10error2Db0} shows, in green, $Log_{10}$ of the magnitude of the giver Fourier coefficients $\hat{f}_{mn}$ and, in blue, $Log_{10}$ of the Fourier coefficients of the difference 
$f-S$. We observe a reduction of three orders of magnitude between the two.

\begin{figure}[!ht]
    \includegraphics[width=4in]{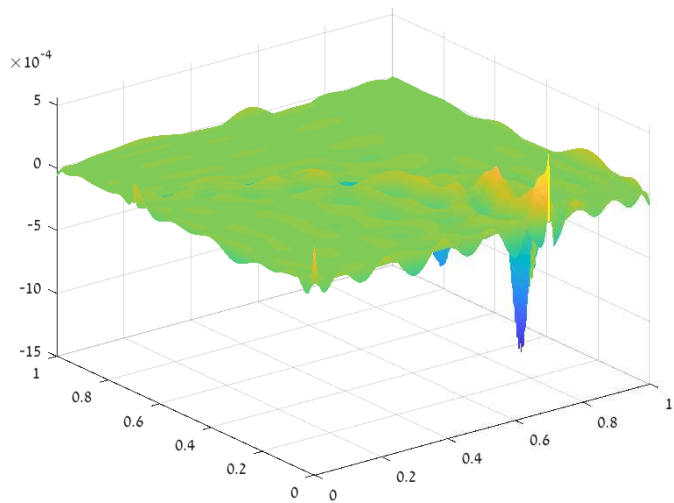}
    \caption{The approximation error with $D_{\bar{b}_1}$.}
    \label{Error2Db0}
\end{figure}

\begin{figure}[!ht]
    \includegraphics[width=4in]{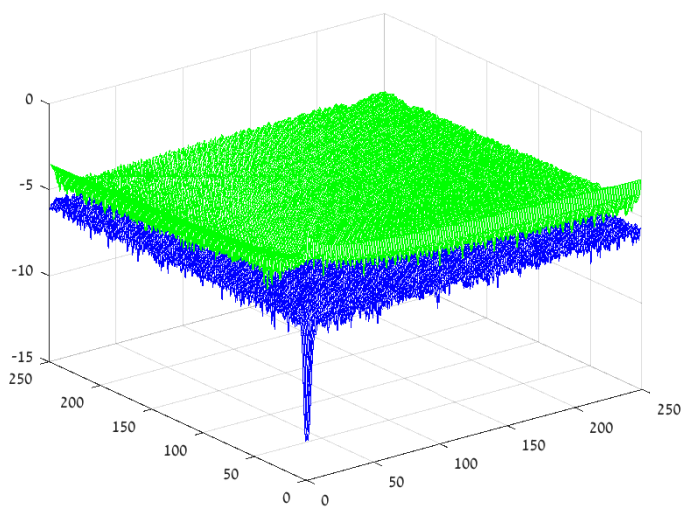}
    \caption{The magnitude reduction of the Fourier coefficients with $D_{\bar{b}_1}$.}
    \label{Log10error2Db0}
\end{figure}

Applying four quasi-Newton iterations took $\sim 24$ minutes execution time. The  
approximation of $\Gamma^*$ by  the zero level set of $D_{\bar{b}_4}$ is now with an error of $10^{-9}$. The consequent approximation error to $f$ is reduced as shown in Figure \ref{Error2Dboptimalnoisy}, and the Fourier coefficients of the error are reduced by 5 orders of magnitude, as shown in Figure \ref{Log10error2Dboptimalnoisy2}.

\begin{figure}[!ht]
    \includegraphics[width=4in]{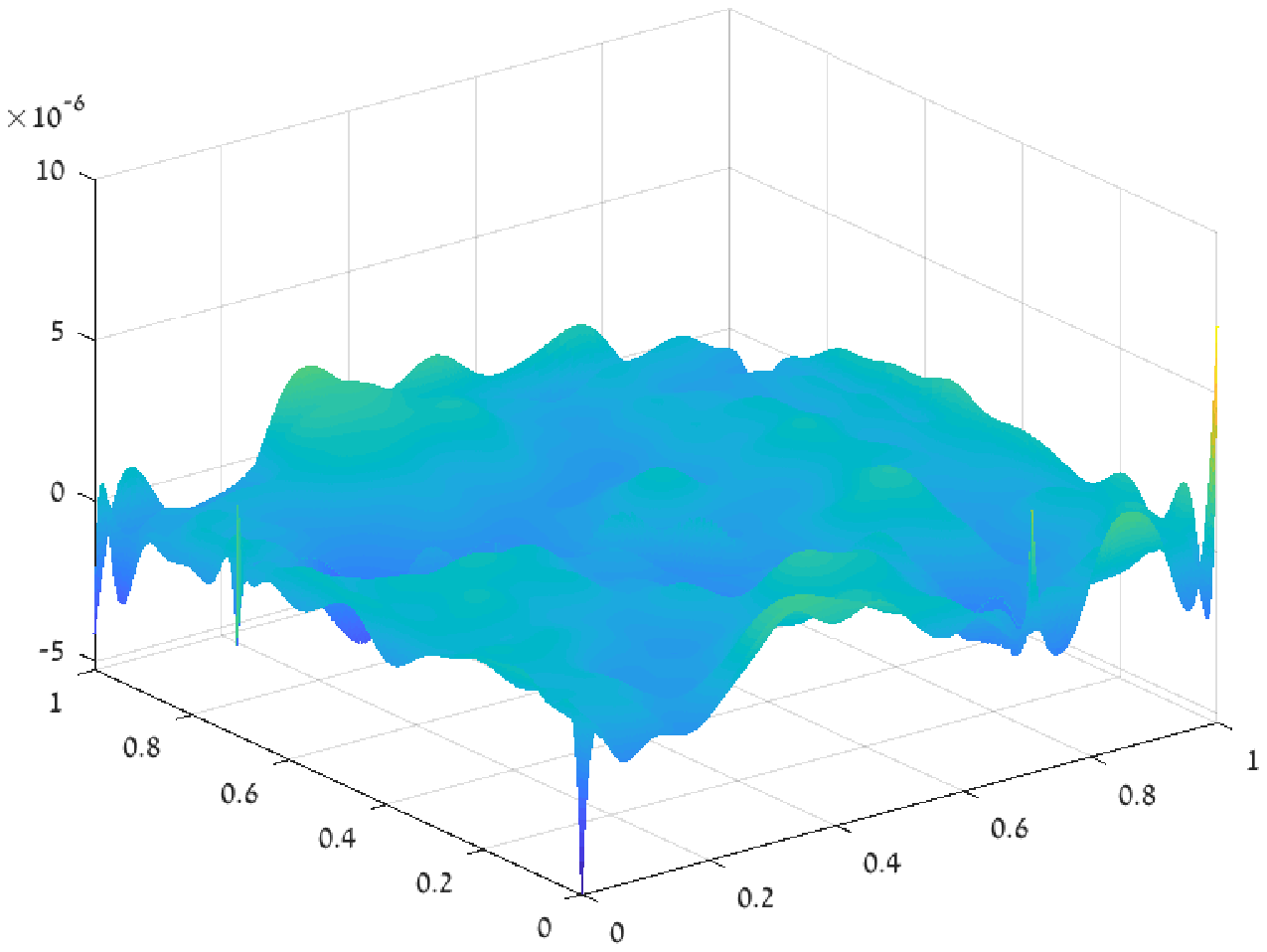}
    \caption{The approximation error with $D_{\bar{b}_4}$.}
    \label{Error2Dboptimalnoisy}
\end{figure}

\begin{figure}[!ht]
    \includegraphics[width=4in]{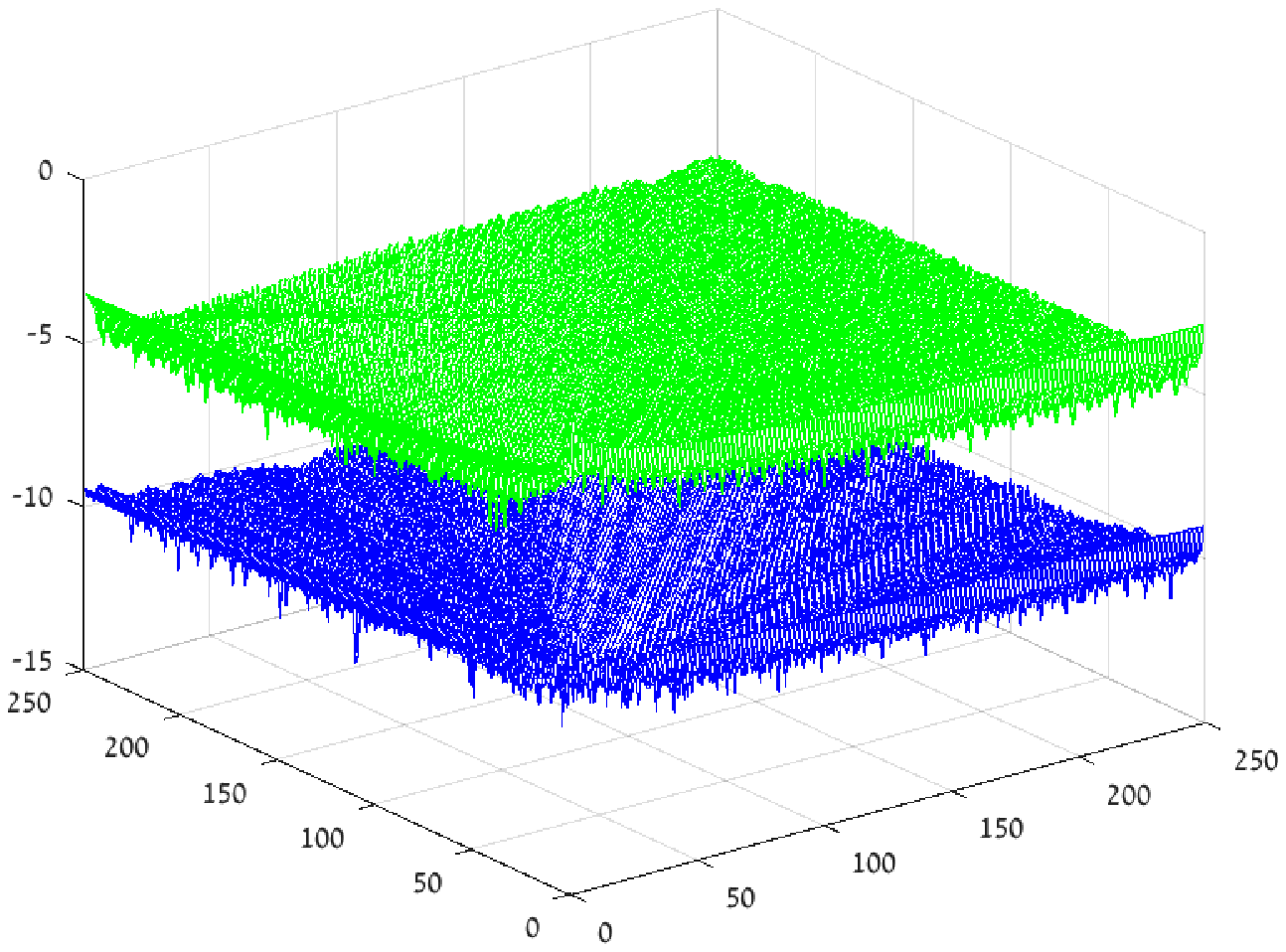}
    \caption{The magnitude reduction of the Fourier coefficients with $D_{\bar{b}_4}$.}
    \label{Log10error2Dboptimalnoisy2}
\end{figure}

\subsubsection{The 2-D approximation procedure}\label{appproc2D}\hfill

\medskip
Let us sum up the suggested approximation procedure:

\begin{enumerate}
\item Choose the approximation space $\Pi_1$ for approximating $f_1$ and $f_2$
and the approximation space $\Pi_2$ for approximating $\Gamma^*$.

\item Define the number of Fourier coefficients to be used for building the approximation such that
\begin{equation}\label{chooseM}
(2M+1)^2\ge 2dim(\Pi_1)+dim(\Pi_2).
\end{equation}
\item Find first approximation to $\Gamma*$:
\begin{enumerate}
\item Compute a partial Fourier sum and locate maximal first order differences along horizontal and vertical lines to find points $P_0$ near $\Gamma^*$, with assigned values $0$.
\item Overlay a net of points $Q_0$ as in Figure \ref{SignedDistanceAppr4}, with assigned signed-distance values.
\item Compute the least-squares approximation from $\Pi_2$ to the values at $P_0\cup Q_0$, denote it $D_{\bar{b}_0}$.
\end{enumerate}
\item Calculate the first $(2M+1)^2$ Fourier coefficients of the basis functions of $\Pi_1$, truncated w.r.t. the zero level curve of $D_{\bar{b}_0}$.
\item Use the above Fourier coefficients to compute the approximation to $f_1$ and $f_2$ by solving the system of linear equation defined by (\ref{Aijns2D}), (\ref{bins2D}).
\item Update $D_b$ to improve the approximation to $\Gamma^*$, by performing quasi-Newton iterations to reduce the objective function in (\ref{LS2D}).
\item Go back to (4) to update the approximation.

\end{enumerate}

\subsubsection{Lower order singularities}\hfill

\medskip
Let us assume that $f(x,y)$ is a continuous function, and that $f_x(x,y)$ is discontinuous across the singularity curve $\Gamma^*$. In this case we cannot use the Gibbs phnomenon effect to approximate the singularity curve. However, the Fourier coefficients 
$$\hat{g}_{mn}=im\hat{f}_{mn},$$
represent a function $g$ that has discontinuity across $\Gamma^*$, and the above procedure for approximating $\Gamma^*$ can be applied.

\subsection{Error analysis}\hfill

\medskip
We consider the non-smooth bivariate case, where $f$ is a combination of two smooth parts, $f_1$ on $\Omega_1$ and $f_2$ on $\Omega_2$, separated by a smooth curve $\Gamma^*$.
Throughout the paper we approximate $f$ using spline functions. In this section we consider approximations by general approximation spaces. Let $\Pi_1$ be the approximation space for approximating the smooth pieces constituting $f$, and let $\Pi_2$ be the approximation space used for approximating the singularity curve. The following assumption characterize and quantify the assumptions about the function $f$ and its singularity curve $\Gamma^*$ in terms the ability to approximate them using the approximation spaces $\Pi_1,\Pi_2$.

\begin{assumption}\label{eps0}
We assume that $\Pi_1$ and $\Pi_2$ are finite dimensional spaces of dimensions $N_1$ and $N_2$ respectively.
\end{assumption}

\begin{assumption}\label{eps1}
We assume that $f_1$ and $f_2$ are smoothly extendable to $[0,1]^2$ and 
$dist_{[0,1]^2}(f_1, \Pi_1)\le \epsilon,\ \ dist_{[0,1]^2}(f_2, \Pi_1)\le \epsilon.$
\end{assumption}

\begin{assumption}\label{eps2}
For $p\in\Pi_2$, let us denote by $\Gamma_0(p)$ the zero level curve of $p$ within $[0,1]^2$. we assume
there exists $p\in\Pi_2$ such that
$$d_H(\Gamma^*,\Gamma_0(p))\le \delta,$$
where $d_H$ denotes the Hausdorff distance.
\end{assumption}

We look for an approximation $S$ to $f$ which is a combination of two components, $p_1\in\Pi_1$ in $\tilde{\Omega}_1$ and $p_2\in \Pi_1$ in $\tilde{\Omega}_2$, separated by $\Gamma_0(p)$, $p\in\Pi_2$,
such that $(2M+1)^2$ Fourier coefficients of $f$ and $S$  are matched in the least-squares sense:
\begin{equation}\label{LS2DA}
F(p_1,p_2,p)=\sum_{m,n=-M}^M |\hat{f}_{mn}-\hat{S}_{mn}|^2\ \to minimum.
\end{equation}

\begin{assumption}\label{eps3}
Consider the above function $S$ constructed by a triple $(p_1,p_2,\Gamma_0(p))$, $p_1,p_2\in\Pi_1$, $p\in\Pi_2$.
We assume that there is a Lipschitz continuous inverse mapping from  the $(2M+1)^2$ Fourier coefficients of $S$ to the triple $(p_1,p_2,\Gamma_0(p))$:
 \begin{equation}\label{inverse}
\{\hat{S}_{mn}\}_{m,n=-M}^M\ \to\ (p_1,p_2,\Gamma_0(p)).
\end{equation}
\end{assumption}

\begin{remark}\label{enable}
To enable the above property we choose $M$ so that
 \begin{equation}\label{enable}
(2M+1)^2> 2N_1+N_2.
\end{equation}
The topology in the space of triples $(p_1,p_2,\Gamma_0(p))$ is in terms of the maximum norm for the first two components and the Hausdorff distance for the third component. 
\end{remark}

\medskip
\begin{proposition}\label{errorbound}
Let $f_1$, $f_2$, $\Gamma^*$, $\Pi_1$ and $\Pi_2$ satisfy Assumptions 
\ref{eps0}, \ref{eps1}, \ref{eps2} and \ref{eps3}. Then the triple $(p_1^*,p_2^*,p^*)$ minimizing (\ref{LS2DA}) provides the following approximation bounds:
\begin{equation}\label{bound01}
 \|f_1-p_1^*\|_{\infty,\Omega_1^*}\le C_1M\epsilon+C_2M\delta,
\end{equation}
\begin{equation}\label{bound02}
 \|f_2-p_2^*\|_{\infty,\Omega_2^*}\le C_1M\epsilon+C_2M\delta,
\end{equation}
and
\begin{equation}\label{bound1}
d_H(\Gamma^*,\Gamma_0(p^*))\le C_3M\epsilon+C_4M\delta,
\end{equation}
where $\Omega_1^*$ and $\Omega_2^*$  are separated by $\Gamma_0(p^*)$.
\end{proposition}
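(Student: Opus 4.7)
The plan is to combine three ingredients: (i) existence, via Assumptions \ref{eps1}--\ref{eps2}, of a ``benchmark'' triple $(\tilde p_1,\tilde p_2,\tilde p)$ that directly approximates $(f_1,f_2,\Gamma^*)$; (ii) a bound on the optimal value $F(p_1^*,p_2^*,p^*)\le F(\tilde p_1,\tilde p_2,\tilde p)$ coming from optimality; and (iii) the Lipschitz inverse of Assumption \ref{eps3}, which lets me translate smallness of Fourier-coefficient differences back into geometric proximity of the minimizing triple.

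First I would pick $\tilde p_1,\tilde p_2\in\Pi_1$ with $\|f_i-\tilde p_i\|_{\infty,[0,1]^2}\le\epsilon$ (using the smooth extensions allowed by Assumption \ref{eps1}) and $\tilde p\in\Pi_2$ with $d_H(\Gamma^*,\Gamma_0(\tilde p))\le\delta$, and form the associated piecewise approximant $\tilde S$. Off the symmetric-difference strip $\tilde E\subset[0,1]^2$ between the $\Gamma^*$-separated and $\Gamma_0(\tilde p)$-separated regions we have $|f-\tilde S|\le\epsilon$; on $\tilde E$ the pointwise error is at most some finite constant $K$ (controlled by $\|f_1-f_2\|_\infty$ on the extensions, plus $\epsilon$). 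Since $\Gamma^*$ is $C^m$-smooth it admits a tubular neighborhood of controlled area, so $|\tilde E|\le C\delta$ and
\[
\|f-\tilde S\|_{L^1([0,1]^2)}\le \epsilon+K C\delta = O(\epsilon+\delta).
\]
Because each Fourier coefficient is bounded by the $L^1$ norm, $|\hat f_{mn}-\hat{\tilde S}_{mn}|\le C_0(\epsilon+\delta)$, and summing over $(2M+1)^2$ indices gives
\[
F(\tilde p_1,\tilde p_2,\tilde p)\le (2M+1)^2 C_0^2(\epsilon+\delta)^2.
\]
Optimality of $(p_1^*,p_2^*,p^*)$ yields the same bound for $F(p_1^*,p_2^*,p^*)$, so in particular $\max_{|m|,|n|\le M}|\hat f_{mn}-\hat S^*_{mn}|\le C_1 M(\epsilon+\delta)$.

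A triangle inequality at the level of Fourier coefficients then gives $\max_{|m|,|n|\le M}|\hat{\tilde S}_{mn}-\hat S^*_{mn}|\le C_2 M(\epsilon+\delta)$. Applying the Lipschitz inverse map of Assumption \ref{eps3} yields $\|\tilde p_i-p_i^*\|_\infty\le C_3 M(\epsilon+\delta)$ and $d_H(\Gamma_0(\tilde p),\Gamma_0(p^*))\le C_3 M(\epsilon+\delta)$. One final triangle inequality, combining these with the benchmark estimates $\|f_i-\tilde p_i\|_\infty\le\epsilon$ and $d_H(\Gamma^*,\Gamma_0(\tilde p))\le\delta$, delivers (\ref{bound01}), (\ref{bound02}) and (\ref{bound1}) (with the understanding that the $\|f_i-p_i^*\|_{\infty,\Omega_i^*}$ bound uses the smooth extensions, since $\Omega_i^*\ne\Omega_i$ in general).

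The main obstacle I anticipate is Assumption \ref{eps3} itself: the argument treats its Lipschitz constant as a universal number absorbed into $C_1,\dots,C_4$, but this constant genuinely depends on the geometry of $\Pi_1,\Pi_2$, on $M$, and in principle on the particular triple, so a fully honest proof would need to quantify the invertibility of the map $\{\hat S_{mn}\}\mapsto(p_1,p_2,\Gamma_0(p))$ under the dimension count (\ref{enable}). A secondary technical point is the area estimate $|\tilde E|\le C\delta$ for the symmetric-difference strip, which needs $\Gamma^*$ to have bounded length and bounded curvature; both follow from the $C^m$-smoothness hypothesis but should be stated explicitly. Everything else is a direct chain of triangle inequalities.
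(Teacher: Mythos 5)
Your proposal is correct and follows essentially the same route as the paper: construct a benchmark triple from Assumptions \ref{eps1}--\ref{eps2}, bound the optimal value of (\ref{LS2DA}) by optimality, pass from the summed bound to a per-coefficient bound of order $M(\epsilon+\delta)$, compare the two approximants by a triangle inequality on Fourier coefficients, invoke the Lipschitz inverse of Assumption \ref{eps3}, and finish with a triangle inequality. In fact your $L^1$/tubular-neighborhood argument for the coefficient bound makes explicit what the paper compresses into the constant $L$ in its estimate $|\hat{f}_{mn}-\hat{\bar{S}}_{mn}|\le\epsilon+L\delta$, and your closing caveats about the $M$-dependence of the Lipschitz constant apply equally to the paper's own proof.
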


\begin{proof}
By Assumptions \ref{eps1}, \ref{eps2} it follows that there exists an approximation $S$ defined as above by a triple $(\bar{p}_1,\bar{p}_2,\bar{p})$, such that
\begin{equation}\label{bound01}
 \|f_1-\bar{p}_1\|_{\infty,[0,1]^2}\le \epsilon,
\end{equation}
\begin{equation}\label{bound02}
 \|f_2-\bar{p}_2\|_{\infty,[0,1]^2}\le \epsilon,
\end{equation}
and
\begin{equation}\label{bound03}
d_H(\Gamma^*,\Gamma_0(\bar{p}))\le \delta.
\end{equation}

Building an approximation $\bar{S}$ to $f$ as above by a triple $(\bar{p}_1,\bar{p}_2,\bar{p})$, we can estimate its Fourier coefficients using the above bounds, and it follows that
\begin{equation}\label{bound2}
 |\hat{f}_{mn}-\hat{\bar{S}}_{mn}|\le \epsilon+L\delta, \ \ -M\le m,n\le M.
\end{equation}
Therefore,
\begin{equation}\label{bound3}
min\{F(p_1,p_2,p)\}
\le (2M+1)^2(\epsilon+L\delta)^2.
\end{equation}

Let 
\begin{equation}\label{LS2arg}
\big[p_1^*,p_2^*,p^*\big]=\argmin\bigg\{\sum_{m,n=-M}^M |\hat{f}_{mn}-\hat{S}_{mn}|^2\bigg\}.
\end{equation}
The approximation $S^*$ to $f$  is the combination of the two components, $p_1^*$ in $\Omega_1^*$ and $p_2^*$ in $\Omega_2^*$, where $\Omega_1^*$ and $\Omega_2^*$  are separated by $\Gamma_0(p^*)$.

Using the bound in (\ref{bound3}) it follows that 
\begin{equation}\label{bound*}
|\hat{f}_{mn}-\hat{S}^*_{mn}|\le (2M+1)(\epsilon+L\delta), \ \ -M\le m,n\le M.
\end{equation}
In view of (\ref{bound2}) and (\ref{bound*}) if follows that
\begin{equation}\label{bound*0}
|\hat{\bar{S}}_{mn}-\hat{S}^*_{mn}|\le (2M+2)(\epsilon+L\delta), \ \ -M\le m,n\le M.
\end{equation}

Using Assumption \ref{eps3}, the bound (\ref{bound*0}) implies 
\begin{equation}\label{bound31}
 \|p_1^*-\bar{p}_1\|_{\infty,\Omega_1^*}\le C(2M+2)(\epsilon+L\delta),
\end{equation}
\begin{equation}\label{bound32}
 \|p_2^*-\bar{p}2\|_{\infty,\Omega_2^*}\le C(2M+2)(\epsilon+L\delta),
\end{equation}
and
\begin{equation}\label{bound33}
d_H(\Gamma_0(p^*),\Gamma_0(\bar{p}))\le C(2M+2)(\epsilon+L\delta).
\end{equation}
The approximation result now follows by considering the inequalities (\ref{bound31})-(\ref{bound33}), together with the inequalities (\ref{bound01})-(\ref{bound03}), and applying the triangle inequality.

\end{proof}
\bigskip

\subsubsection{Validity of the approximation assumptions}\hfill

\medskip
Let us check the validity of  Assumptions 
\ref{eps0}, \ref{eps1}, \ref{eps2} and \ref{eps3}
for the approximation tools suggested in Section \ref{Sec2Dns} and used in the above numerical tests.

We assume that $f_1,f_2\in C^m[0,1]^2$, and that $\Gamma^*$ is a $C^m$ curve.
To construct the approximation to $f_1$ and $f_2$ we use the space $\Pi_1$ of $k$th degree tensor-product splines with equidistant knots' distance $d$ in each direction, $k\le m$. The approximation to $\Gamma^*$ is obtained using the approximation space $\Pi_2$ of $\ell$th degree tensor product splines with equidistant knots' distance $h$ in each direction, $\ell\le m$.
$dim(\Pi_1)=(1/d+k-1)^2 \equiv N_d^2 $, $dim(\Pi_2)=(1/h+\ell-1)^2 \equiv N_h^2$, and for both spaces we use the $B$-spline basis functions. Assumptions  \ref{eps1} and \ref{eps2} are fulfilled with $\epsilon=C_1d^k$ and $\delta=C_2h^\ell$.

Assumption \ref{eps3} is more challenging. To define the mapping 
 \begin{equation}\label{inverse1}
\{\hat{S}_{mn}\}_{m,n=-M}^M\ \to\ (p_1,p_2,\Gamma_0(p)),
\end{equation}
we use the same procedure \ref{appproc2D} for defining the approximation to $f$:

We represent $p$ and $p_1,p_2$ using the $B$-spline basis function as in ({\ref{Db}) and (\ref{S12D}), (\ref{S22D}) respectively. Each triple $(p_1,p_2,p)$ defines a piecewise spline approximation $T(x,y)$, and we look for the approximation T(x,y)
such that $(2M+1)^2$ Fourier coefficients of $T$ match the Fourier coefficients $\{\hat{S}_{mn}\}_{m,n=-M}^M$  in the least-squares sense:
\begin{equation}\label{LS2DS}
\big[\{a_{1ij}\}_{i,j=1}^{N_d},\{a_{2ij}\}_{i,j=1}^{N_d},\{b_{ij}\}_{i,j=1}^{N_h}\big]=\argmin\big(\sum_{m,n=-M}^M |\hat{S}_{mn}-\hat{T}_{mn}|^2\big).
\end{equation}
Out of all the possible solutions of the above problem we look for the one with minimial coefficients' norm, i.e., minimizing
\begin{equation}\label{LSaij}
\sum_{i,j=1}^{N_d}a_{1ij}^2+\sum_{i,j=1}^{N_d}a_{2ij}^2.
\end{equation}
Following the procedure \ref{appproc2D} we observe that every step in the procedure is smooth with respect to its input. Possible non-uniqueness in solving the linear system of equations on step (5) is resolved by using the generalized inverse. Therefore, the composition of all the steps is also a smooth function of the input, which implies the validity of Assumption \ref{eps3}.
\bigskip
\vfill\eject
\section{The 3-D case}

\subsection{Numerical Example - The smooth 3-D case}\hfill

\medskip
We consider the test function 
$$f(x,y,z)=(x^2+y^2+z^2-0.5)sin(4(x+y-z)),$$ 
assuming only its Fourier series coefficients are given. We have used only $10^3$ Fourier coefficients and constructed an approximation using 5th-degree tensor product splines with equidistant knots' distance $d=0.1$ in each direction.
 For this case, the matrix $A$ is of size $15^3\times 15^3$, and $cond(A)=1.2\times 10^{22}$. Again, we have employed the iterative refinement algorithm to obtain a high precision solution. The test function is shown in Figure \ref{Fig3Dtestfd01}. The error in the resulting approximation is displayed in Figure \ref{Fig3Derrord01}.
 
 \begin{figure}[!ht]
    \includegraphics[width=4in]{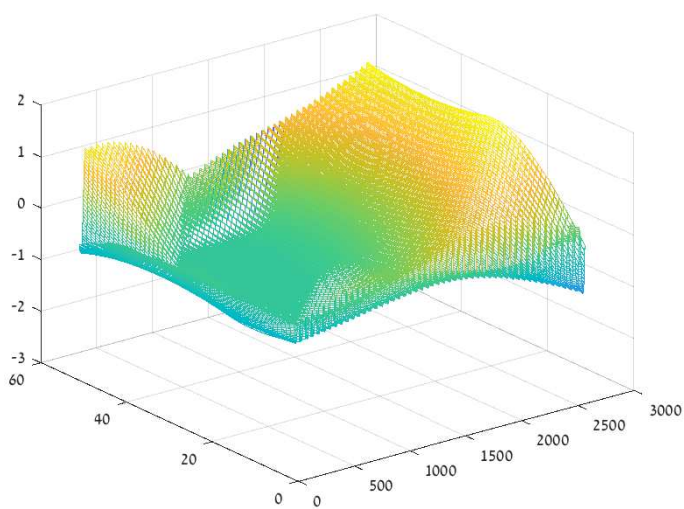}
    \caption{The 3D test function reshaped into 2D.}
    \label{Fig3Dtestfd01}
\end{figure}
 
  \begin{figure}[!ht]
    \includegraphics[width=4in]{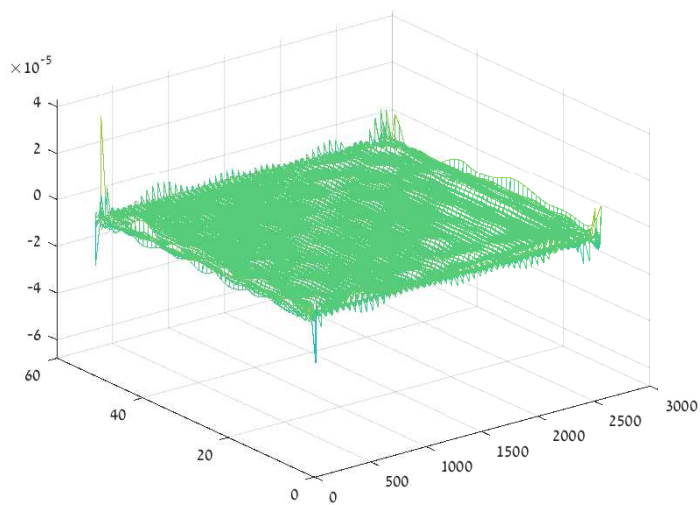}
    \caption{The approximation error graph,  reshaped into 2D.}
    \label{Fig3Derrord01}
\end{figure}
 
\section{Concluding remarks}

The basic crucial assumption behind the presented Fourier acceleration strategy is that the underlying function is piecewise `nice'. That is, piecewisely, the function can be well approximated by a suitable finite set of basis functions. The Fourier series of the function may be given to us as a result of the computational method dictated by the structure of the mathematical problem at hand.
In itself, the Fourier series may not be the best tool for approximating the desired solution, and yet it contains all the information about the requested function. Utilizing this information we can derive high accuracy piecewise approximations to that function.
The simple idea is to make the approximation match the coefficients of the given Fourier series. The suggested method is simple to implement for the approximation of smooth non-periodic functions in any dimension. The case of non-smooth functions is more challenging, and a special strategy is suggested and demonstrated for the univariate and bivariate cases. The paper contains a descriptive graphical presentation of the approximation procedure, together with a fundamentall error analysis.

\bigskip

\end{document}